\documentclass{hha}

\usepackage[alphabetic]{amsrefs}
\usepackage[all]{xy}
\newdir{ >}{{}*!/-8pt/@{>}} %% cf xyguide exercise 14
\SelectTips{cm}{} %% cf xyguide page 9
\usepackage{xcolor}
\colorlet{darkgreen}{green!40!black}
\usepackage{ifpdf}
\ifpdf \usepackage[pdftex]{hyperref}
\else \usepackage[ps2pdf]{hyperref} % or dvipdfmx, if you use that
      \usepackage{breakurl}
\fi
\hypersetup{colorlinks=true,urlcolor=blue,citecolor=darkgreen,
linkcolor=darkgreen,linktocpage,bookmarksnumbered,unicode}

\newcommand{\cA}{\mathcal{A}}
\newcommand{\F}{\mathbb{F}}
\newcommand{\lra}{\longrightarrow}
\newcommand{\lla}{\longleftarrow}
\newcommand{\ds}{\displaystyle}

\DeclareMathOperator{\fib}{fib}
\DeclareMathOperator{\Ext}{Ext}

\newtheorem{question}[theorem]{Question}

\begin{document}

\title{The Fiber of $Sq^n$}

\author{Robert R. Bruner}
\email{robert.bruner@wayne.edu}
\address{Mathematics Department,
         Wayne State University,
         Detroit, Michigan, 48067
         USA}
\classification{55T15,57R67}

\keywords{Adams spectral sequence, differentials, Adams filtration}

\begin{abstract}
A colleague asked about the Adams filtrations of the homotopy 
classes in the homotopy of the fiber of a particular map between GEMs.
Theorem 1.1 in [Bruner-Rognes, Trans AMS 2022]
proves to be effective in answering this (see Theorem 4.4).
We show that this and some related Adams spectral sequences all
collapse at $E_3$ and we determine the value of $E_3 = E_\infty$.
Notably, we do not need to determine the cohomology
of the fiber or the $E_2$ term of the Adams spectral sequence
to do this.
\end{abstract}

% Please enter the EditFlow identifier for the article here:
%\EditFlow{YYMMDD-Name}

% Leave these items like this, and the journal will fill them in.
\received{Month Day, Year}   % receive date (for example: October 11, 1999)
\revised{Month Day, Year}    % date of revision; omit, if no revision;
                             % if multiple revisions, separate by commas
%\accepted{Month Day, Year}  % acceptance date
\published{Month Day, Year}  % publish date
\submitted{Bill Murray}      % Name of Journal's Editor, who handled Article 
\volumeyear{2023} % Volume Year
\volumenumber{25} % Volume Number 
\issuenumber{1}   % Issue Number
\startpage{401}     % PageNumber of first page
\articlenumber{1} % Sequence number of article within issue

% If copyright is retained by the author(s), comment this out:
% \owner{International Press}

\maketitle
% Adding a TofC messes up the first page and is unnecessary in such a short paper.
% \setcounter{tocdepth}{2}
% \tableofcontents

%%%%%%%%%%%%%%%%% End sample files from HHA %%%%%%%%%%%%%%%%%%%%%

\section{Introduction}

Write $H$ for the mod 2 Eilenberg MacLane spectrum, $HZ$ for the integral
Eilenberg MacLane spectrum, and
$\cA$ for the mod 2 Steenrod algebra.  We will write
$\Ext(M) $ for $\Ext_\cA(M,\F_2) $.

For certain purposes it is useful to know not only the homotopy
groups of a spectrum, but also the Adams filtrations of the classes
involved.  A colleague asked me, in connection with applications in
surgery theory, how to determine these for the 
fiber $F$  of the map
\[ HZ \lra \prod_{i>0} \Sigma^{2i} H
\]
which is $Sq^{2i}$ to the $i^{\text{th}}$ factor.  It is easily seen that the
homotopy groups are $Z$ in degree $0$, and $Z/2$ in each positive odd degree.
However, the cohomology of the fiber is somewhat less evident, and it is
not hard to verify that $\Ext(H^*F)$ is sufficiently large that the Adams
filtrations of these homotopy classes is not immediately apparent.  Clearly,
differentials must intervene to result in an $E_\infty$ term which gives
the evident homotopy groups.  

This is exactly the situation faced in studying the Adams spectral
sequence for the connective image of J spectrum:  the homotopy
groups are evident from the fiber sequence, but the $E_2$ term of
the Adams spectral sequence is sufficiently large that the pattern
of differentials which produces these homotopy groups is not evident.
In~\cite{Davis}, Don Davis described the $E_2$ term and noted that a large
number of non-zero differentials were required to produce an $E_\infty$ term
consistent with the known homotopy groups.  In~\cite{ImJ},
John Rognes and the author proved a theorem which
addresses exactly this situation, showing that the $d_2$ differential
in  the Adams spectral sequence $\Ext(H^*\fib(f)) \Longrightarrow \pi_*\fib(f)$
is closely related to a 2-extension containing $f^*$.
We start by stating this result in Section~\ref{d2}.

Then, in the next two sections we study two variants of one factor of
the map in question, followed by a section in which we answer the
main question.   Notably, we do not have to calculate the cohomology
of the fiber or its Ext groups in any of these cases.   (We are not claiming that this is difficult, just that it is unnecessary.)   Theorem~\ref{d2thm} allows us
to determine the $E_3 = E_\infty$ term of the Adams spectral sequence directly, without
needing to have an explicit description of the $E_2$ term.  

This reminds the
author of the fact that the secondary Adams spectral sequence of Baues, Jibladze,  
Nassau, Frankland et al
(\cite{Baues}, \cite{BJ}, \cite{Nassau}, \cite{BF})
starts from the $E_3$ term of the classical Adams spectral sequence, and we
ask, in the final section, whether secondary cohomology has any bearing on
the results here.

\section{The theorem on $d_2$}
\label{d2}

Write $H$ for the mod 2 Eilenberg MacLane spectrum and
$\cA$ for the mod 2 Steenrod algebra.  We will write
$\Ext(M) $ for $\Ext_\cA(M,\F_2) $.

Let $X=Ff \lra Y \stackrel{f}{\lra} Z$ be a fiber sequence of spectra.   Factor its
long exact sequence in cohomology into short exact sequences:
\[
\xymatrix{
& H^*X
\ar@{->>}^{p_K}[ld]
&& H^*Y
\ar[ll]
\ar@{->>}^{p_C}[ld]
&& H^*Z
\ar_{f^*}[ll]
\ar@{->>}^{p_I}[ld]
&& H^* \Sigma X
\ar[ll]
\ar@{->>}^{p_K}[ld]
\\
\Sigma^{-1} K
&& C
\ar@{>->}^{i_C}[lu]
&& I
\ar@{>->}^{i_I}[lu]
&& K
\ar@{>->}^{i_K}[lu]
\\
}
\]
writing $K$, $I$ and $C$  for the kernel, image and cokenel of $f^*$.
These three short exact sequences of $\cA$-modules induce long exact sequences in
$\Ext$ with boundary maps $\partial_{KC} = \partial_X$,
$\partial_{CI} = \partial_Y$, and
$\partial_{IK} = \partial_Z$.

\begin{theorem}[\cite{ImJ}*{Theorem 1.1}]
\label{d2thm}
Let $d_2$ be the differential in the $E_2$ term of the 
Adams spectral sequence $\Ext(H^*X) \Longrightarrow \pi_*X$.
For each $(s,t)$ the composite
\[
\Ext^{s,t}(\Sigma^{-1} K) \xrightarrow{p_K^*}
\Ext^{s,t}(H^*X) \xrightarrow{d_2}
\Ext^{s+2,t+1}(H^*X) \xrightarrow{i_C^*}
\Ext^{s+2,t+1}(C) 
\]
is given by Yoneda composition with
\[
0 \lla C \lla H^*Y \stackrel{f^*}{\lla} H^*Z  \lla K \lla 0;
\]
equivalently, by the composite of the boundary maps
\[
\Ext^{s,t}(\Sigma^{-1}K) =
\Ext^{s,t+1}(K) \xrightarrow{\partial_{IK}}
\Ext^{s+1,t+1}(I) \xrightarrow{\partial_{CI}}
\Ext^{s+2,t+1}(C).
\]
\end{theorem}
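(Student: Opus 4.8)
The plan is to express the secondary differential $d_2$ through the geometry of the cofiber sequence $X \to Y \xrightarrow{f} Z \xrightarrow{\partial} \Sigma X$ and to recognize the result as a composite of two primary boundary maps, which splice to the stated $2$-extension. The guiding principle is that a $d_2$ is a secondary operation whose value on a class pulled back along a connecting map is computed by the two primary attaching maps that the connecting map interposes; an alternative would be a chain-level computation of the secondary obstruction in an Adams resolution organized by $0 \to C \xrightarrow{i_C} H^*X \xrightarrow{p_K} \Sigma^{-1}K \to 0$, but the filtration bookkeeping is cleaner geometrically.

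First I would record that $p_K \colon H^*X \twoheadrightarrow \Sigma^{-1}K$ is the surjection underlying the cohomology map of the connecting map $\partial \colon Z \to \Sigma X$, since $K = \ker f^* = \im \partial^*$; hence $p_K^*$ is (a desuspension of) the map of Adams $E_2$-terms induced by $\partial$. By the geometric boundary theorem, the composite $d_2 \circ p_K^*$ is controlled by the Adams filtration of $\partial$ together with the differentials in the Adams spectral sequence of $Z$: a class $x \in \Ext^{s,t+1}(K)$, viewed inside the Adams $E_2$-term of $Z$ via $K \hookrightarrow H^*Z$, has connecting behaviour measured by $\partial_{IK} = \partial_Z \colon \Ext^{s,t+1}(K) \to \Ext^{s+1,t+1}(I)$, the boundary of $0 \to K \to H^*Z \to I \to 0$.

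Next I would feed the resulting class in $\Ext^{s+1,t+1}(I)$ through the second stage, the cohomology short exact sequence $0 \to I \to H^*Y \to C \to 0$ of $Y$, whose boundary is $\partial_{CI} = \partial_Y$. The role of $i_C^*$ is to land the answer in $\Ext^{s+2,t+1}(C)$: since $C = \ker(H^*Y \to I)$ sits as the bottom of the extension defining $H^*X$, corestriction along $i_C$ selects exactly the $C$-component of the secondary obstruction. Concatenating the two stages would yield $i_C^* \, d_2 \, p_K^* = \partial_{CI} \circ \partial_{IK}$, with the internal-degree shift $t \mapsto t+1$ of $d_2$ absorbed into the desuspension identification $\Ext^{s,t}(\Sigma^{-1}K) = \Ext^{s,t+1}(K)$.

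The step I expect to be the main obstacle is the rigorous identification of the secondary obstruction with the iterated boundary, rather than merely with a term in a Massey product carrying nonzero indeterminacy. The care required is threefold: checking that the Adams filtration jumps by exactly one at each stage, so that $\partial_Z$ and $\partial_Y$ compose without intervening $d_1$-corrections; verifying that the choices of lifts contribute only terms annihilated by $i_C^*$; and confirming that the geometric boundary theorem may be applied twice in succession, once for $\partial \colon Z \to \Sigma X$ and once for $f \colon Y \to Z$. Granting these, the equivalence of the two formulations in the statement is the standard homological fact that Yoneda product with the spliced $2$-extension $0 \to K \to H^*Z \xrightarrow{f^*} H^*Y \to C \to 0$ coincides with the composite $\partial_{CI}\circ\partial_{IK}$ of the connecting homomorphisms of its two constituent short exact sequences.
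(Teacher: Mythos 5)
You should first note that this paper contains no proof of this statement at all: it is imported verbatim from \cite{ImJ}*{Theorem 1.1}, so the comparison must be against what an actual proof requires rather than against a proof in the text. Measured that way, your proposal has a genuine gap, and you have in fact named it yourself: the ``main obstacle'' you defer --- the rigorous identification of $i_C^*\, d_2\, p_K^*$ with the iterated boundary $\partial_{CI}\partial_{IK}$ --- \emph{is} the theorem. Everything else in your sketch (the identification of $p_K$ with the cohomology of the connecting map $\partial\colon Z \to \Sigma X$, and the final paragraph's standard fact that Yoneda product with the spliced $2$-extension equals the composite of the two connecting homomorphisms) is either bookkeeping or classical homological algebra; none of it touches the topological input that makes the statement true.

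Moreover, the tool you propose for the hard step does not apply as invoked. The geometric boundary theorem requires the map of spectra to induce a \emph{short exact sequence} in cohomology, so that the algebraic connecting homomorphism is a filtration-one phenomenon visible at the level of induced maps of $E_2$-terms; here $f^*$ has both a kernel and a cokernel, which is precisely why the answer appears as a $d_2$ rather than as a $d_1$-level identification, and neither $f$ nor $\partial$ satisfies the hypothesis. There is also a variance error in your second step: a class $x \in \Ext^{s,t+1}(K)$ cannot be ``viewed inside the Adams $E_2$-term of $Z$ via $K \hookrightarrow H^*Z$,'' since $\Ext(-) = \Ext_{\cA}(-,\F_2)$ is contravariant --- the inclusion $i_K$ induces a map $\Ext(H^*Z) \to \Ext(K)$ going the other way, and that map is in general neither injective nor surjective, so there is no canonical lift of $x$ to the spectral sequence of $Z$ on which a boundary theorem could act. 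A correct proof (the one in \cite{ImJ}) requires a genuine construction: one compares Adams resolutions (towers) for $X$, $Y$ and $Z$, realizes the modules $K$, $I$, $C$ through the maps of towers, and chases the two-stage lifting that computes $d_2$ at the chain level, verifying that the choices made change the answer only by terms killed by $i_C^*$ and restricted along $p_K^*$. Your proposal presupposes the conclusion of exactly that chase.
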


\section{The fiber of $Sq^n : H \lra H$}

We first apply Theorem~\ref{d2thm} to the fiber sequence
$F_n \lra H \xrightarrow{Sq^n} \Sigma^n H$.

\begin{theorem}
\label{prop1}
The Adams spectral sequence 
\[
\Ext_{\cA}(H^*F_n,\F_2) \Longrightarrow \pi_* F_n
\]
collapses at $E_3$, with
\[
E_3^{*,*} = E_\infty^{*,*} = \F_2 \oplus \Sigma^{1,n}\F_2.
\]
\end{theorem}

Of course this is what we might expect for $E_\infty$,
but $E_2$ is very far from this.   
Theorem~\ref{d2thm} allows us to prove the result without
calculating  either $H^*F_n$ or the $E_2$ term explicitly.

As above, let $K$, $C$, and $I$ be the kernel, cokernel and image
of $\Sigma^n \cA \xrightarrow{Sq^n} \cA$, and
factor the long exact cohomology sequence
of $F_n \lra H \xrightarrow{Sq^n} \Sigma^n H$ into short exact 
sequences:
\[
\xymatrix{
& H^*F_n
\ar@{->>}^{p_K}[ld]
&& \cA
\ar[ll]
\ar@{->>}^{p_C}[ld]
&& \Sigma^n \cA
\ar_{Sq^n}[ll]
\ar@{->>}^{p_I}[ld]
&& H^* \Sigma F_n
\ar[ll]
\ar@{->>}^{p_K}[ld]
\\
\Sigma^{-1} K
&& C
\ar@{>->}^{i_C}[lu]
&& I
\ar@{>->}^{i_I}[lu]
&& K
\ar@{>->}^{i_K}[lu]
\\
}
\]
Theorem~\ref{d2thm} states that the composite
\[
\Ext^{s,t}(\Sigma^{-1} K) \xrightarrow{p_K^*}
\Ext^{s,t}(H^*F_n) \xrightarrow{\,d_2\,}
\Ext^{s+2,t+1}(H^*F_n) \xrightarrow{i_C^*}
\Ext^{s+2,t+1}(C) 
\]
is given by the composite of the boundary maps
\[
\Ext^{s,t}(K) \xrightarrow{\partial_{IK}}
\Ext^{s+1,t}(I) \xrightarrow{\partial_{CI}}
\Ext^{s+2,t}(C).
\]

\begin{lemma}
Both of these boundary maps are isomorphisms for $s\geq 0$ and
all $t$.

Further, $\Ext^0(C) = \F_2$ and $\Ext^1(C) = \Sigma^n\F_2$.
\end{lemma}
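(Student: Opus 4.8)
The plan is to extract everything from the two short exact sequences
$0 \to K \stackrel{i_K}{\lra} \Sigma^n\cA \stackrel{p_I}{\lra} I \to 0$ and
$0 \to I \stackrel{i_I}{\lra} \cA \stackrel{p_C}{\lra} C \to 0$
via their long exact sequences in $\Ext$, exploiting that both $\Sigma^n\cA$ and $\cA$ are \emph{free} $\cA$-modules. Freeness gives $\Ext^s(-)=0$ for $s\geq 1$ and $\Ext^0(-)$ concentrated in the single internal degree of the generator, namely $t=n$ for $\Sigma^n\cA$ and $t=0$ for $\cA$. The only inputs beyond formal homological algebra will be a connectivity observation about $K$ and the identification of $I$ as a cyclic module.

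For $\partial_{IK}$, the long exact sequence of the first short exact sequence reads
\[
\Ext^{s,t}(I) \lra \Ext^{s,t}(\Sigma^n\cA) \lra \Ext^{s,t}(K) \stackrel{\partial_{IK}}{\lra} \Ext^{s+1,t}(I) \lra \Ext^{s+1,t}(\Sigma^n\cA).
\]
For $s\geq 1$ the two outer groups vanish by freeness, so $\partial_{IK}$ is an isomorphism at once. For $s=0$ it is surjective because $\Ext^{1,t}(\Sigma^n\cA)=0$, and injectivity reduces to showing that the restriction map $\Ext^0(\Sigma^n\cA)\to\Ext^0(K)$ is zero. Here I would observe that, since the generator of $\Sigma^n\cA$ maps under $Sq^n$ to $Sq^n\neq 0$, the submodule $K$ is concentrated in internal degrees strictly greater than $n$; since $\Ext^{0,t}(\Sigma^n\cA)$ is concentrated at $t=n$ while the target $\Ext^{0,n}(K)$ vanishes, the map must be zero. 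Hence $\partial_{IK}$ is an isomorphism for all $s\geq 0$.

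For $\partial_{CI}$, the long exact sequence of the second short exact sequence, indexed at homological degree $s+1\geq 1$, gives
\[
0 = \Ext^{s+1,t}(\cA) \lra \Ext^{s+1,t}(I) \stackrel{\partial_{CI}}{\lra} \Ext^{s+2,t}(C) \lra \Ext^{s+2,t}(\cA) = 0,
\]
so freeness of $\cA$ alone makes $\partial_{CI}$ an isomorphism for all $s\geq 0$, with no further argument needed.

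Finally, for the explicit values I would use that $C=\cA/\cA\,Sq^n$ is cyclic on a generator in degree $0$, so that $\Ext^0(C)=\Hom_\cA(C,\F_2)=\F_2$ (any map to $\F_2$ factors through the augmentation, which kills the positive-degree ideal $\cA\,Sq^n$), and that $I=\cA\,Sq^n$ is cyclic on the single class $Sq^n$ in degree $n$, its degree-$n$ part being exactly $\F_2\cdot Sq^n$. Thus the minimal resolution of $C$ begins $\Sigma^n\cA\to\cA\to C$, giving $\Ext^1(C)=\Sigma^n\F_2$; equivalently this falls out of the $s=0$ portion of the second long exact sequence, where $\partial_{CI}\colon\Ext^{0,t}(I)\to\Ext^{1,t}(C)$ is an isomorphism in positive degrees and $\Ext^0(I)$ is $\F_2$ concentrated at $t=n$. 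The genuine content is the $s=0$ injectivity of $\partial_{IK}$, which rests on the connectivity of $K$, together with recognizing $I$ as cyclic on one degree-$n$ generator; every isomorphism for $s\geq 1$ is immediate from freeness, so this is where I expect the only real work to lie.
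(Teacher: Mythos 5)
Your proof is correct and is essentially the paper's own argument: the paper disposes of this lemma with the single line that it is ``a standard consequence of the long exact sequences in $\Ext$ containing $\partial_{IK}$ and $\partial_{CI}$,'' and your write-up supplies exactly those details (vanishing of higher $\Ext$ for the free modules $\Sigma^n\cA$ and $\cA$, the connectivity of $K$ to handle injectivity at $s=0$, and the identification of $I=\cA Sq^n$ as cyclic on a degree-$n$ generator to get $\Ext^0(C)=\F_2$ and $\Ext^1(C)=\Sigma^n\F_2$). Nothing to correct.
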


\begin{proof}
This is a standard consequence of the long exact sequences in $\Ext$
containing $\partial_{IK}$ and $\partial_{CI}$ since
$\Ext(\Sigma^n\cA) = \Sigma^{0,n}\F_2$.
\end{proof}

\begin{proof}[Proof of Proposition~\ref{prop1}]
By the Lemma, the composite $i_C^* d_2 p_K^* = \partial_{CI}\partial_{IK}$
is an isomorphism.
Hence $p_K^*$ is mono and $i_C^*$
is epi, so that $\Ext(H^*F_n)$ sits in a short exact sequence
\[
0 \lra \Ext^{s,t}(\Sigma^{-1}K) \xrightarrow{p_K^*} \Ext^{s,t}(H^*F_n) 
 \xrightarrow{i_C^*} \Ext^{s,t}(C)  \lra 0.
\]
The homology with respect to $d_2$ therefore consists of
classes $\Ext^{s,t}(H^*F_n)$ which map isomorphically to
$\Ext^{s,t}(C)$ for $s=0$ or $1$.  These are
$\Ext^{0,0}(C) = \F_2$ and $\Ext^{1,n}(C) = \F_2$.
\end{proof}

\section{The fiber of $Sq^n : HZ \lra H$}

If we let $F_nZ = \fib{(HZ \xrightarrow{Sq^n} \Sigma^n H)}$, the result
is nearly the same, as the proof is nearly the same:  the composite
\[
\Ext^{s,t}(K) \xrightarrow{\partial_{IK}}
\Ext^{s+1,t}(I) \xrightarrow{\partial_{CI}}
\Ext^{s+2,t}(C).
\]
remains a monomorphism, but now has cokernel $\F_2[h_0]$
rather than $\F_2$.

\begin{theorem}
The Adams spectral sequence 
\[
\Ext_{\cA}(H^*F_nZ,\F_2) \Longrightarrow \pi_* F_nZ
\]
collapses at $E_3$, so that
\[
E_3^{*,*} = E_\infty^{*,*} = \F_2[h_0] \oplus \Sigma^{1,n} \F_2
\]
with $h_0$ in bidegree $(s,t) = (1,1)$.
\end{theorem}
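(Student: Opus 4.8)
The plan is to run exactly the argument of Proposition~\ref{prop1}, applying Theorem~\ref{d2thm} to the fiber sequence $F_nZ \lra HZ \stackrel{Sq^n}{\lra} \Sigma^n H$ and factoring its long exact cohomology sequence into short exact sequences with kernel, image, and cokernel $K$, $I$, $C$ of $\Sigma^n\cA \stackrel{Sq^n}{\lra} \cA/\cA Sq^1$. The only structural change is that the middle term is now $H^*HZ = \cA/\cA Sq^1$ in place of $\cA$, so the first thing I would record is that, by the change-of-rings isomorphism for the exterior subalgebra $E(Sq^1)\subset\cA$,
\[
\Ext(\cA/\cA Sq^1) = \Ext_{E(Sq^1)}(\F_2,\F_2) = \F_2[h_0],
\]
concentrated in the zero stem $t=s$. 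Theorem~\ref{d2thm} then identifies $i_C^* d_2 p_K^*$ with $\partial_{CI}\partial_{IK}$ precisely as before. I assume $n\geq 2$, so that $Sq^n\neq 0$ in $\cA/\cA Sq^1$ and $I\neq 0$.

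The next step is the analogue of the Lemma. For $0 \lra K \lra \Sigma^n\cA \lra I \lra 0$ the middle $\Ext$ is $\Sigma^n\F_2$, unchanged, so $\partial_{IK}\colon \Ext^s(K) \lra \Ext^{s+1}(I)$ is an isomorphism for all $s\geq 0$ (the generator of $\Sigma^n\cA$ maps to $Sq^n\neq 0$ in $I$, hence is not in $K$, so $\Ext^0(\Sigma^n\cA)$ restricts to zero on $K$). The new feature appears in $0 \lra I \lra \cA/\cA Sq^1 \lra C \lra 0$: its middle term contributes the whole tower $\F_2[h_0]$, but this sits in the zero stem, where $\Ext(I)$ vanishes because $I$ is concentrated in internal degrees $\geq n$. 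Hence the restriction $i_I^*$ is zero, $\partial_{CI}$ is injective, and
\[
\Ext^{s+1}(C) \;\cong\; \partial_{CI}\bigl(\Ext^s(I)\bigr)\ \oplus\ \F_2\{h_0^{s+1}\},
\]
the first summand being the non-zero-stem part and the second the zero-stem tower. In particular $\phi = \partial_{CI}\partial_{IK}$ is injective for all $s\geq 0$, with image the non-zero-stem part of $\Ext(C)$ in filtrations $\geq 2$.

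Finally I would conclude as in Proposition~\ref{prop1}. Injectivity of $\phi = i_C^* d_2 p_K^*$ forces $p_K^*$ to be mono, hence $\partial_{KC}=0$ and the short exact sequence
\[
0 \lra \Ext^{s,t}(\Sigma^{-1}K) \stackrel{p_K^*}{\lra} \Ext^{s,t}(H^*F_nZ) \stackrel{i_C^*}{\lra} \Ext^{s,t}(C) \lra 0
\]
holds. Passing to $d_2$-homology, every class of $\Ext(\Sigma^{-1}K)$ supports a nonzero $d_2$ and dies, $d_2$ hits exactly the non-zero-stem classes of $\Ext(C)$ in filtration $\geq 2$, and the survivors are the zero-stem tower $\F_2[h_0]$ together with $\Ext^{1,n}(C)=\partial_{CI}(\Ext^{0,n}(I)) = \Sigma^{1,n}\F_2$; this is $E_3$. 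The main obstacle, and the point where the computation differs qualitatively from Proposition~\ref{prop1}, is the zero stem: I must verify that the tower $\F_2[h_0]$ is genuinely inert. That it is untouched by $d_2$ is guaranteed by the vanishing of $\Ext(I)$ in the zero stem above; to see that it survives all higher differentials I would identify $\F_2[h_0]$ with the permanent cycles coming from $HZ$ via the map $F_nZ \lra HZ$ (an isomorphism on $\pi_0$), whose Adams spectral sequence is $\F_2[h_0]$ with every class permanent. The remaining possible differentials are ruled out by sparsity together with $h_0$-linearity of Adams differentials — in particular the potential $d_r$ from $\Sigma^{1,n}\F_2$ into the tower when $n=2$ cannot occur, since that class is $h_0$-torsion while the tower is $h_0$-torsion-free. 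Together these give $E_3 = E_\infty$.
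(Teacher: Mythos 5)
Your proposal is correct and follows essentially the same route as the paper, which disposes of this theorem in a single sentence: rerun Proposition 2.2 with $K$, $I$, $C$ now formed from $Sq^n\colon \Sigma^n\cA \to \cA/\cA Sq^1$, observe that $\partial_{CI}\partial_{IK}$ is still a monomorphism, and note that its cokernel now contains the $h_0$-tower coming from $\Ext(\cA/\cA Sq^1)=\F_2[h_0]$, exactly as in your computation. Your extra care about the collapse $E_3=E_\infty$ --- in particular ruling out, when $n=2$, a differential from the class in bidegree $(1,n)$ into the $h_0$-tower, via $h_0$-linearity of the differentials and naturality along $F_nZ \to HZ$ --- supplies a point that the paper's terse proof passes over in silence, and is a worthwhile supplement rather than a deviation.
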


%% Move this to precede the statement of the theorem in order to eliminate
%% widow on the last page of the paper.
%%
% The proof is nearly the same:  the composite
% \[
% \Ext^{s,t}(K) \xrightarrow{\partial_{IK}}
% \Ext^{s+1,t}(I) \xrightarrow{\partial_{CI}}
% \Ext^{s+2,t}(C).
% \]
% remains a monomorphism, but now has cokernel $\F_2[h_0]$
% rather than $\F_2$.

\section{The case of interest}

The case of interest combines the preceding maps for all even positive $n$.
Let $F$ be the fiber of
\[
HZ \lra \prod_{i > 0} \Sigma^{2i} H
\]
with components $Sq^{2i}$.   In cohomology this induces
\[
\xymatrix@C=4ex{
& H^*F
\ar@{->>}^{p_K}[ld]
&& \cA/\cA Sq^1
\ar[ll]
\ar@{->>}^{p_C}[ld]
&& \bigoplus \Sigma^{2i} \cA
\ar_{\bigoplus Sq^{2i}}[ll]
\ar@{->>}^{p_I}[ld]
&& H^* \Sigma F
\ar[ll]
\ar@{->>}^{p_K}[ld]
\\
\Sigma^{-1} K
&& C
\ar@{>->}^{i_C}[lu]
&& I
\ar@{>->}^{i_I}[lu]
&& K
\ar@{>->}^{i_K}[lu]
\\
}
\]
Clearly $C = \F_2$ and $I = \overline{\cA/\cA Sq^1}$, the kernel of
the nontrivial homomorphism ${\cA/\cA Sq^1} \lra \F_2$.
The boundary map $\partial_{CI}$ is particularly simple.

\begin{lemma}
In $\Ext$  the short exact sequence 
\[
0 \lla C=\F_2 \xleftarrow{~p_C} \cA/\cA Sq^1 \xleftarrow{i_I} I \lla 0
\]
induces the short exact sequence
\[
0 \lra  \Sigma^{-1,0}\Ext(I) \xrightarrow{\partial_{CI}}
\Ext(\F_2) \xrightarrow{p_C^*} \F_2[h_0] \lra 0
\]
so that  $\Ext^{s,t}(I) = \Ext^{s+1,t}(\F_2)$ for $t-s > 1$ and
$0$ otherwise.  This consists of
all of $\Ext(\F_2)$ except the $h_0$ tower in the 0-stem,
shifted down one in $s$ and hence up one in $t-s$.
\qed
\end{lemma}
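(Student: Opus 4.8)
The plan is to feed the short exact sequence $0\to I\xrightarrow{i_I}\cA/\cA Sq^1\xrightarrow{p_C}\F_2\to0$ into the long exact sequence in $\Ext_\cA(-,\F_2)$ and to deduce the entire statement from the single fact that $p_C^*$ is surjective. First I would write down the long exact sequence
\[
\cdots\to\Ext^{s,t}(\F_2)\xrightarrow{p_C^*}\Ext^{s,t}(\cA/\cA Sq^1)\xrightarrow{i_I^*}\Ext^{s,t}(I)\xrightarrow{\partial_{CI}}\Ext^{s+1,t}(\F_2)\to\cdots,
\]
noting that $\partial_{CI}$ raises homological degree by one and preserves internal degree. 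Everything then hinges on computing the middle term and the map out of it.

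The second step is to identify $\Ext_\cA(\cA/\cA Sq^1,\F_2)$. Since $\cA/\cA Sq^1=\cA\otimes_{\cA(0)}\F_2$, where $\cA(0)=E[Sq^1]$ is the exterior subalgebra on $Sq^1$, and $\cA$ is free as a right $\cA(0)$-module by Milnor--Moore, change of rings gives $\Ext_\cA(\cA/\cA Sq^1,\F_2)\cong\Ext_{\cA(0)}(\F_2,\F_2)=\F_2[h_0]$, concentrated in the $0$-stem with $h_0$ in bidegree $(1,1)$. This pins down the right-hand term of the asserted sequence.

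The crux is the third step: showing $p_C^*$ is surjective. Under the change-of-rings isomorphism, $p_C^*$ is naturally identified with the restriction homomorphism $\Ext_\cA(\F_2,\F_2)\to\Ext_{\cA(0)}(\F_2,\F_2)$ induced by $\cA(0)\hookrightarrow\cA$. This is a map of graded rings carrying $h_0$ to $h_0$; since the target $\F_2[h_0]$ is generated as a ring by $h_0$, the restriction is onto. (Equivalently, one checks $p_C^*$ is an isomorphism in bidegree $(0,0)$ and is a map of modules over $\Ext_\cA(\F_2,\F_2)$, so that $h_0$-multiplication sweeps out the whole tower.) I expect this identification---that precomposition with the augmentation $p_C$ corresponds to restriction along $\cA(0)\hookrightarrow\cA$, hitting the entire $h_0$-tower---to be the one point requiring care; the rest is formal.

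Given surjectivity of $p_C^*$, exactness forces $i_I^*=0$, so $\partial_{CI}$ becomes injective with image $\ker(p_C^*)$, yielding the short exact sequence
\[
0\to\Sigma^{-1,0}\Ext(I)\xrightarrow{\partial_{CI}}\Ext(\F_2)\xrightarrow{p_C^*}\F_2[h_0]\to0.
\]
Finally I would read off $\Ext^{s,t}(I)\cong\ker\bigl(p_C^*\colon\Ext^{s+1,t}(\F_2)\to\F_2[h_0]\bigr)$. Because $\F_2[h_0]$ is supported on the $0$-stem and $p_C^*$ restricts to an isomorphism on the $h_0$-tower there, the kernel equals all of $\Ext^{s+1,t}(\F_2)$ except when $(s+1,t)$ lies on the $0$-stem, i.e.\ when $t-s=1$, where $p_C^*$ is injective and the kernel vanishes (negative stems contribute nothing). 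This gives $\Ext^{s,t}(I)=\Ext^{s+1,t}(\F_2)$ for $t-s>1$ and $0$ otherwise, namely all of $\Ext(\F_2)$ except the $0$-stem $h_0$-tower, reindexed down one in $s$ and hence up one in $t-s$.
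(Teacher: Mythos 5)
Your proof is correct, and it supplies precisely the standard argument that the paper leaves implicit (the lemma is stated with a \qed and no written proof): the long exact sequence in $\Ext$, the change-of-rings isomorphism $\Ext_\cA(\cA/\cA Sq^1,\F_2)\cong\Ext_{\cA(0)}(\F_2,\F_2)=\F_2[h_0]$ coming from $\cA/\cA Sq^1=\cA\otimes_{\cA(0)}\F_2$ and freeness of $\cA$ over $\cA(0)$, and surjectivity of $p_C^*$ via its identification with the restriction map, which is a ring map hitting the generator $h_0$. You correctly isolate the one point needing care (that $p_C^*$ corresponds to restriction under the Shapiro isomorphism), and the rest of your deduction of the short exact sequence and of $\Ext^{s,t}(I)=\Ext^{s+1,t}(\F_2)$ for $t-s>1$, zero otherwise, is exactly as the paper intends.
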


Next we consider $\partial_{IK}$.

\begin{lemma}
The homomorphism $\partial_{IK}$ sits in an exact sequence
\begin{multline*}
0 \lra 
\left(\bigoplus_{\substack{i>0 \\ \,\,\,\, i \neq 2^j}} \Sigma^{0,2i}\F_2\right)^{s,t}
\lra
\Ext^{s,t}(K) \xrightarrow{\partial_{IK}}\\
\Ext^{s+1,t}(I) \lra
\left(\bigoplus_{j>0} \Sigma^{0,2^j} \F_2\right)^{s+1,t}
\lra 0.
\end{multline*}
The kernel of $\partial_{IK}$ consists of $\F_2$'s in 
$\Ext^{0,2i}(K)$ for all $2i$ that are not powers of $2$.
The image of $\partial_{IK}$ consists of the positive Adams
filtration elements in $\Ext(I)$.   This image is mapped isomorphically by
$\partial_{CI}$ to that part of $\Ext(C) = \Ext(\F_2)$
which has $t-s>0$ and is in Adams filtration $2$ or greater.
\end{lemma}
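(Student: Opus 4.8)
The engine is the long exact sequence in $\Ext$ coming from the defining short exact sequence
\[
0 \lra K \stackrel{i_K}{\lra} P \stackrel{p_I}{\lra} I \lra 0,
\qquad P:=\bigoplus_{i>0}\Sigma^{2i}\cA .
\]
The first thing I would exploit is that $P$ is free: $\Ext^{s}(P)=0$ for $s>0$ and $\Ext^{0}(P)=\bigoplus_{i>0}\Sigma^{0,2i}\F_2$, one class in bidegree $(0,2i)$ per generator. Feeding this into the long exact sequence collapses it: for $s\geq 1$ the boundary $\partial_{IK}\colon\Ext^{s}(K)\to\Ext^{s+1}(I)$ is squeezed between two zero groups and so is an isomorphism, while at the bottom one is left with
\[
0\lra \Ext^{0}(I)\stackrel{p_I^*}{\lra}\Ext^{0}(P)\stackrel{i_K^*}{\lra}\Ext^{0}(K)\stackrel{\partial_{IK}}{\lra}\Ext^{1}(I)\lra 0 .
\]
Since $\partial_{IK}$ raises filtration it can never meet $\Ext^{0}(I)$, so this already fixes the shape of the Lemma: viewed in all of $\Ext(I)$ the cokernel of $\partial_{IK}$ is $\Ext^{0}(I)$, its image is the positive filtration part $\Ext^{\geq 1}(I)$, and $\ker\partial_{IK}=\im i_K^*\cong\cok p_I^*$ lies in filtration $0$.

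All the genuine content is to identify the two filtration $0$ groups. For $\Ext^{0}(I)=\Hom_{\cA}(I,\F_2)$ I would compute the module indecomposables $I/\cA^{+}I$, where $\cA^{+}$ is the augmentation ideal. Using $I=\overline{\cA/\cA Sq^1}=\cA^{+}/\cA Sq^1$ one gets
\[
I/\cA^{+}I \;=\; \cA^{+}\big/\big((\cA^{+})^{2}+\cA Sq^1\big),
\]
i.e. the algebra indecomposables $Q\cA=\cA^{+}/(\cA^{+})^{2}$ modulo the image of the left ideal $\cA Sq^1$. By Milnor's theorem $Q\cA$ has basis $\{Sq^{2^j}:j\geq 0\}$, and $\cA Sq^1$ contributes only the class of $Sq^1$ (any $a\,Sq^1$ with $a\in\cA^{+}$ is already decomposable). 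Hence $I/\cA^{+}I$ has basis $\{Sq^{2^j}:j\geq 1\}$ and $\Ext^{0}(I)=\bigoplus_{j>0}\Sigma^{0,2^j}\F_2$, which is exactly the cokernel term of the Lemma.

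Next I would read off $p_I^*$. As $p_I$ carries the degree $2i$ generator of $P$ to $Sq^{2i}\in I$, the class of $Sq^{2i}$ in $I/\cA^{+}I$ is nonzero precisely when $2i$ is a power of $2$; so the dual generator of $\Ext^{0}(I)$ labelled by $Sq^{2^j}$ goes to the summand $i=2^{j-1}$ of $\Ext^{0}(P)$. Therefore
\[
\cok p_I^* \;=\; \bigoplus_{\substack{i>0\\ i\neq 2^j}}\Sigma^{0,2i}\F_2,
\]
which, by the exactness above, is the kernel of $\partial_{IK}$. This assembles the four term exact sequence of the Lemma together with its two identifications: $\ker\partial_{IK}$ is the set of classes $\Ext^{0,2i}(K)$ with $2i$ not a power of two, and $\im\partial_{IK}=\Ext^{\geq 1}(I)$.

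The concluding assertion about $\partial_{CI}$ is then immediate from the preceding Lemma, which identifies $\partial_{CI}\colon\Ext^{s,t}(I)\to\Ext^{s+1,t}(\F_2)$ as an isomorphism exactly when $t-s>1$ (the complement of the $h_0$ tower). Restricting this to the image $\Ext^{\geq 1}(I)$ of $\partial_{IK}$ sends it isomorphically into the classes of $\Ext(\F_2)$ of filtration $\geq 2$ and positive stem; surjectivity onto that whole region holds because such classes avoid the $h_0$ tower and hence lie in $\im\partial_{CI}$. The one genuinely non-formal step is the indecomposables computation of the second paragraph: the crux is knowing that passing from $\cA$ to $\cA/\cA Sq^1$ deletes exactly the generator $Sq^1$ from $Q\cA$, so that the indecomposable generators of $I$ sit precisely in the degrees $2^j$ with $j\geq 1$. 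Once that is in hand, everything else is bookkeeping in the long exact sequence.
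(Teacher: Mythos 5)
Your proof is correct and follows essentially the same route as the paper: the long exact sequence in $\Ext$ induced by $0 \to K \to \bigoplus_{i>0}\Sigma^{2i}\cA \to I \to 0$, the vanishing of higher $\Ext$ of the free middle term, and the dichotomy that $Sq^{2i}$ is indecomposable precisely when $2i$ is a power of $2$. The only difference is expository: the paper asserts the resulting splitting of $\Ext^0(\bigoplus_{i>0}\Sigma^{2i}\cA)$ directly from that dichotomy, while you derive it by computing the module indecomposables $I/\cA^{+}I$ via Milnor's theorem, which is just a more explicit rendering of the same argument.
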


\begin{proof}
The short exact sequence
\[
0 \lla I \xleftarrow{\,p_I\,} \bigoplus_{i>0} \Sigma^{2i}\cA
\xleftarrow{i_K} K \lla 0
\]
induces
\[
\xymatrix@R=2ex@C=2ex{
\Sigma^{-1,0} \Ext(K)
\ar^(0.6){\partial_{IK}}[rr]
&&
\Ext(I)
\ar^{p_I^*}[rr]
\ar@{->>}[rd]
&&
{\ds{\bigoplus_{i>0} \Sigma^{2i}\F_2}}
\ar^{i_K^*}[rr]
\ar@{->>}[rd]
&&
\Ext(K).
\\
&&&
{\ds{\bigoplus_{j>0} \Sigma^{2^j} \F_2}}
\ar@{ >->}[ru]
&&
{\ds{\bigoplus_{\substack{i>0 \\ \,\,\,\, i \neq 2^j}} \Sigma^{2i}\F_2}}
\ar@{ >->}[ru]
}
\]
The summands of $\Ext(\bigoplus\Sigma^{2i} \cA) = \bigoplus \Sigma^{2i} \F_2$ split as they do
because, when $2i$ is a power of $2$,
$Sq^{2i}$ is indecomposable, hence not in the image of lower summands,
while, when $2i$ is not a power of $2$,
$Sq^{2i}$ is decomposable, hence already
in the image of the lower summands.
\end{proof}

Combining these two results, we have

\begin{lemma}
The composite 
\[
\partial_{CI}\partial_{IK} : \Ext(\Sigma^{-1} K) \lra \Ext(C) = \Ext(\F_2)
\]
has kernel 
\[
{\ds{\bigoplus_{\substack{i>0 \\ \,\,\,\, i \neq 2^j}} \Sigma^{2i}\F_2}}
\]
and cokernel $\F_2[h_0] \oplus \bigoplus_{j>0} \F_2\{h_j\}$.
\end{lemma}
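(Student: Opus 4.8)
The plan is to read off both the kernel and the cokernel of the composite directly from the two preceding lemmas, since between them they describe $\partial_{IK}$ and $\partial_{CI}$ completely; no new homological input is needed, only careful bookkeeping. The one structural fact that makes the composite tractable is that $\partial_{CI}$ is a \emph{monomorphism} (the first lemma of this section exhibits it as the injection in a short exact sequence with cokernel $\F_2[h_0]$), so the two computations decouple cleanly.

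First I would dispose of the kernel. Because $\partial_{CI}$ is injective, $\ker(\partial_{CI}\partial_{IK}) = \ker(\partial_{IK})$, and the second lemma identifies the latter as the copies of $\F_2$ in $\Ext^{0,2i}(K)$ for those $2i$ that are not powers of $2$. This is exactly
\[
\bigoplus_{\substack{i>0 \\ \,\,\,\, i \neq 2^j}} \Sigma^{2i}\F_2,
\]
as claimed, so the kernel costs essentially no work.

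Next I would compute the image, and hence the cokernel. Since $\partial_{CI}$ is defined on all of $\Ext(I)$, the image of the composite is $\partial_{CI}(\im \partial_{IK})$. The second lemma says $\im \partial_{IK}$ is precisely the positive Adams filtration part of $\Ext(I)$, and that $\partial_{CI}$ carries this isomorphically onto the part of $\Ext(\F_2)$ lying in stems $t-s>0$ and filtration $s\geq 2$. The cokernel is therefore the complement of this region inside $\Ext(\F_2)$, and the task reduces to enumerating the classes of $\Ext(\F_2)$ that are \emph{not} hit: those in filtration $0$ or $1$, together with the rest of the $0$-stem.

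The part that requires care -- and the only place there is anything to check -- is this final accounting. In filtration $0$ the only class is the unit; in filtration $1$ we have all of the $h_j$ for $j\geq 0$; and in the $0$-stem above filtration $1$ we have the powers $h_0^s$ for $s\geq 2$. Assembling the unit, $h_0$, and the $h_0^s$ into the full $h_0$-tower $\F_2[h_0]$, and collecting the remaining filtration-one classes $h_j$ with $j>0$ separately, yields
\[
\cok(\partial_{CI}\partial_{IK}) = \F_2[h_0] \oplus \bigoplus_{j>0} \F_2\{h_j\}.
\]
The main obstacle, such as it is, is keeping the indexing of the two boundary maps consistent: each raises filtration by one, and the lemma on $\partial_{CI}$ records the resulting reindexing $\Ext^{s,t}(I) = \Ext^{s+1,t}(\F_2)$ valid for $t-s>1$. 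Once that is pinned down, the identification of which low-filtration classes survive is forced, and the two towers in the cokernel fall out.
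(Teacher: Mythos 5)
Your proposal is correct and follows essentially the same route as the paper, which states this lemma as an immediate consequence of the two preceding lemmas (``Combining these two results''): injectivity of $\partial_{CI}$ identifies the kernel with $\ker \partial_{IK}$, and the description of $\partial_{CI}(\im \partial_{IK})$ as the part of $\Ext(\F_2)$ with $t-s>0$ and filtration at least $2$ forces the cokernel to be $\F_2[h_0] \oplus \bigoplus_{j>0}\F_2\{h_j\}$. Your explicit enumeration of the low-filtration classes is exactly the bookkeeping the paper leaves implicit.
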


Shifting this kernel
down one degree to account for the $\Sigma^{-1}$ in the domain
of $\partial_{CI}\partial_{IK}$, we reach our main result.

\begin{theorem}
\label{ofinterest}
The $E_3=E_\infty$ term of the Adams spectral sequence for $\pi_*F$
has $\F_2[h_0]$ in the $0$-stem, a single $\F_2$ in each positive odd stem,
and $0$ otherwise.
The $\F_2$ is
in Adams filtration $1$ in degrees $2^j-1$, and  in Adams filtration $0$
in degrees $2i-1$ when $i$ is not a power of $2$. \qed
\end{theorem}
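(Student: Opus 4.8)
The plan is to compute $E_3 = H(\Ext(H^*F), d_2)$ exactly as in Proposition~\ref{prop1}, the sole new feature being that the composite
\[
\phi = \partial_{CI}\partial_{IK} = i_C^* \, d_2 \, p_K^* \colon \Ext(\Sigma^{-1}K) \lra \Ext(C) = \Ext(\F_2)
\]
supplied by Theorem~\ref{d2thm} is no longer an isomorphism. Its kernel and cokernel are precisely those identified in the preceding Lemma, and the goal is to show that both survive to $E_3$, so that
\[
E_3 \cong \ker\phi \;\oplus\; \cok\phi .
\]
As in Proposition~\ref{prop1}, this lets us read off $E_3$ without ever computing $H^*F$ or the $E_2$ term.

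First I would record the two maps $p_K^*$ and $i_C^*$ attached to the short exact sequence $0 \to C \to H^*F \to \Sigma^{-1}K \to 0$, together with the relation $i_C^* d_2 p_K^* = \phi$. The aim is to promote these, as in Proposition~\ref{prop1}, to a short exact sequence
\[
0 \lra \Ext(\Sigma^{-1}K) \stackrel{p_K^*}{\lra} \Ext(H^*F) \stackrel{i_C^*}{\lra} \Ext(C) \lra 0
\]
and to check that, with respect to a splitting, $d_2$ is carried by the single off-diagonal map $\phi$. Granting this, the homology is immediate: the classes in $\Ext(\Sigma^{-1}K)$ surviving to $E_3$ are exactly $p_K^*(\ker\phi)$, which are permanent cycles because $\phi$ annihilates them; in $\Ext(C)$ exactly $\cok\phi$ survives, being the part not hit; and all remaining classes cancel in pairs under $\phi$.

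The hard part will be this structural step. In Proposition~\ref{prop1} the fact that $\phi$ was an isomorphism forced $p_K^*$ to be mono and $i_C^*$ to be epi at one stroke, and it left no room for any component of $d_2$ other than $\phi$. With $\phi$ now possessing both kernel and cokernel that shortcut is unavailable, so I must argue directly from the long exact $\Ext$ sequence of $0 \to C \to H^*F \to \Sigma^{-1}K \to 0$ that its boundary map $\partial_{KC}$ vanishes (yielding the short exact sequence above), and separately that no part of $d_2$ beyond $\phi$ links the surviving $\ker\phi$ and $\cok\phi$ classes. Ruling out such a spurious differential or extension is the real content of the proof.

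Finally I would translate the combined Lemma into stems and filtrations. The cokernel $\F_2[h_0] \oplus \bigoplus_{j>0}\F_2\{h_j\}$ puts the $h_0$-tower in the $0$-stem and, since $h_j \in \Ext^{1,2^j}(\F_2)$, a filtration-$1$ class in each stem $2^j-1$; the kernel $\bigoplus_{i>0,\,i\neq 2^j}\Sigma^{2i}\F_2$, shifted down one degree to account for the $\Sigma^{-1}$, puts a filtration-$0$ class in each stem $2i-1$ with $i$ not a power of $2$. These exhaust the positive odd stems and give the asserted $E_3$. That $E_3 = E_\infty$ then follows from sparseness: every $d_r$ with $r \geq 3$ lowers the stem by exactly one, so the only possible higher differential runs from the class $h_1$ in stem $1$ into the $0$-stem, and this is excluded because $\pi_0 F = \pi_0 HZ = \Z$ forces the entire $h_0$-tower to be permanent cycles.
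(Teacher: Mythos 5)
Your strategy is the same one the paper intends: identify $E_3$ with $\ker\phi\oplus\cok\phi$ and read off placements from the combined Lemma (the paper offers no separate argument beyond the shifting remark before the statement), and your translation into stems and filtrations is correct. But as a proof your proposal stops exactly where the work begins: you never establish the short exact sequence $0 \to \Ext(\Sigma^{-1}K) \to \Ext(H^*F) \to \Ext(C) \to 0$, and you never justify $E_3 \cong \ker\phi\oplus\cok\phi$; you explicitly label these ``the real content of the proof'' and leave them undone. That is a genuine gap, not a presentational one. Worse, the route you propose for filling it --- arguing directly that $d_2$ has no components besides $\phi$ --- cannot succeed from the data in the Lemmas. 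For example, nothing there forbids a component of $d_2$ carrying a kernel class in $\Ext^{0,2i-1}(\Sigma^{-1}K)$ into $\im p_K^*$ in filtration $2$ (the receiving group $\Ext^{2,2i+1}(K)\cong\Ext^{4,2i+1}(\F_2)$ is often nonzero, e.g.\ it contains $h_0^3h_3$ when $i=5$), nor a component carrying a lift of $h_1$ to a lift of $h_0^3$; one can write down a differential on $\Ext(\Sigma^{-1}K)\oplus\Ext(C)$ with $d_2^2=0$, inducing $\phi$, whose homology lacks $h_1$ and $h_0^3$. So no purely formal argument of the kind you envision exists.

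What does work uses two inputs you do not exploit. First, $\ker\phi$ is concentrated in filtration $s=0$. This gives $p_K^*$ mono everywhere ($s=0$ is automatic, since $\Ext^0=\Hom_{\cA}(-,\F_2)$ applied to the surjection $H^*F\onto\Sigma^{-1}K$ is injective; for $s\geq 1$, $\phi=i_C^*d_2p_K^*$ is injective), hence $\partial_{KC}=0$ and the desired short exact sequence; and since $d_2p_K^*$ is injective wherever $\phi$ is, a dimension count yields the bound $\dim E_3^{s,t} \leq \dim(\ker\phi)^{s,t}+\dim(\cok\phi)^{s,t}$ \emph{regardless} of what other components $d_2$ may have. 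Second, the matching lower bound comes from convergence: $\pi_*F$ is computed directly from the defining fiber sequence ($\Z$ in degree $0$, $\F_2$ in each positive odd degree, $0$ otherwise), so $E_\infty$, and a fortiori $E_3$, must be at least this large; the $0$-stem has no gaps because $F\to HZ$ is an isomorphism on $\pi_0$ and cannot decrease Adams filtration, so $2^s$ has filtration exactly $s$ in $\pi_0 F$. Together these force $E_3=E_\infty$ to equal the stated answer in all stems simultaneously. Note in particular that homotopy-theoretic input is needed already to determine $E_3$, not only for your final step; once it is used this way, your sparseness argument (which is correct as far as it goes, but whose exclusion of $d_r(h_1)$ again needs the no-gap statement for the $h_0$-tower, not merely ``permanent cycles'') becomes unnecessary.
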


Evidently, the isomorphism $\pi_{2i-1}F \lra \pi_{2i-1}F_{2i}Z$ induced by the projection map
\[
\xymatrix{
F
\ar[r]
\ar[d]
&
HZ
\ar@{=}[d]
\ar[r]
&
{\ds{\prod_{i>0} \Sigma^{2i} H}}
\ar^{{\text{proj}}_i}[d]
\\
F_{2i}Z
\ar[r]
&
HZ
\ar[r]
&
\Sigma^{2i} H
\\
}
\]
preserves Adams filtration when $i$ is a power of $2$,
while it raises Adams filtration
by $1$ when $i$ is not a power of $2$.

\begin{remark}
For geometric reasons it might be more natural to consider
the fiber $F'$ of $(\chi(Sq^{2i}))_{i}$.  Since $\chi(Sq^{2i})$
is decomposable iff $Sq^{2i}$ is, the argument above shows that
Theorem~\ref{ofinterest} applies equally well to $\pi_*F'$.
\end{remark}

\section{Secondary cohomology}

This may be an interesting test case for secondary cohomology,
since $\Ext$ in the secondary category gives the $E_3$-term directly.
Our calculations therefore tell us the secondary $\Ext$ modules for the 
secondary cohomology of the three fibers we considered.
This raises a specific question and a more general one.

\begin{question}
Can one compute the secondary cohomology of the fibers $F_n$, $F_nZ$ 
and $F$ and determine
the $E_3$-term of their Adams spectral sequences from this?
\end{question}

\begin{question}
Can Theorem~\ref{d2thm} be proved (or improved)
using secondary cohomology and the secondary Adams spectral sequence?
\end{question}

Positive answers to these questions do not seem to exist in the primary
sources,
\cite{Baues}, \cite{BJ}, \cite{Nassau}, \cite{BF}.

\begin{bibdiv}
\begin{biblist}

\bib{Baues}{book}{
   author={Baues, Hans-Joachim},
   title={The algebra of secondary cohomology operations},
   series={Progress in Mathematics},
   volume={247},
   publisher={Birkh\"auser Verlag, Basel},
   date={2006},
   pages={xxxii+483},
   isbn={3-7643-7448-9},
   isbn={978-3-7643-7448-8},
   review={\MR{2220189}},
   doi={10.1007/3-7643-7449-7},
}

\bib{BF}{article}{
   author={Baues, Hans-Joachim},
   author={Frankland, Martin},
   title={2-track algebras and the Adams spectral sequence},
   journal={J. Homotopy Relat. Struct.},
   volume={11},
   date={2016},
   number={4},
   pages={679--713},
   issn={2193-8407},
   review={\MR{3578994}},
   doi={10.1007/s40062-016-0147-x},
}

\bib{BJ}{article}{
   author={Baues, Hans-Joachim},
   author={Jibladze, Mamuka},
   title={Secondary derived functors and the Adams spectral sequence},
   journal={Topology},
   volume={45},
   date={2006},
   number={2},
   pages={295--324},
   issn={0040-9383},
   review={\MR{2193337}},
   doi={10.1016/j.top.2005.08.001},
}

\bib{ImJ}{article}{
   author={Bruner, Robert R.},
   author={Rognes, John},
   title={The Adams spectral sequence for the image-of-$J$ spectrum},
   journal={Trans. Amer. Math. Soc.},
   volume={375},
   date={2022},
   number={8},
   pages={5803--5827},
   issn={0002-9947},
   review={\MR{4469237}},
   doi={10.1090/tran/8680},
}

\bib{Davis}{article}{
   author={Davis, Donald M.},
   title={The cohomology of the spectrum $bJ$},
   journal={Bol. Soc. Mat. Mexicana (2)},
   volume={20},
   date={1975},
   number={1},
   pages={6--11},
   review={\MR{0467749}},
}

\bib{Nassau}{article}{
   author={Nassau, Christian},
   title={On the secondary Steenrod algebra},
   journal={New York J. Math.},
   volume={18},
   date={2012},
   pages={679--705},
   review={\MR{2991420}},
}

\end{biblist}
\end{bibdiv}

\end{document}